\definecolor{darkgreen}{rgb}{0,0.5,0}
\definecolor{darkred}{rgb}{0.7,0,0}
\theoremstyle{plain}
\numberwithin{equation}{section}
\newcommand{\h}{\ensuremath{{\mathcal H}}}
\newcommand{\pl}[2]{{\frac{\partial #1}{\partial #2}}}
\newcommand{\ti}{\tilde}
\newcommand{\al}{\alpha}
\newcommand{\Om}{\Omega}
\newcommand{\vph}{\varphi}
\newcommand{\ep}{\varepsilon}
\newcommand{\R}{\ensuremath{{\mathbb R}}}
\newcommand{\downto}{\downarrow}
\newcommand{\lap}{\Delta}
\DeclareMathOperator{\Vol}{Vol}
\DeclareMathOperator{\VolB}{VolB}
\DeclareMathOperator{\inj}{inj}
\newcommand{\beq}{\begin{equation}}
\newcommand{\eeq}{\end{equation}}
\newcommand{\beqa}{\begin{equation}\begin{aligned}}
\newcommand{\eeqa}{\end{aligned}\end{equation}}
\newcommand{\brmk}{\begin{rmk}}
\newcommand{\ermk}{\end{rmk}}
\newcommand{\partref}[1]{\hbox{(\csname @roman\endcsname{\ref{#1}})}}
\newcommand{\half}{\frac{1}{2}}
\newcommand{\Rm}{{\mathrm{Rm}}}
\newcommand{\Ric}{{\mathrm{Ric}}}
\newcommand{\K}{{\ensuremath{\mathrm{K_{IC_1}}}}}
\newcommand{\I}{{\ensuremath{\mathrm{IC_1}}}}
 \newtheorem{thm}{Theorem}[section]
\newtheorem{lem}[thm]{Lemma}
\newtheorem{rem}[thm]{Remark}
\title{\sc time zero regularity of ricci flow}
\author{Man-Chun Lee and Peter M. Topping}
\date{14 October 2022}
\begin{document}

%

\maketitle

\begin{abstract}
We consider the problem of when
a smooth Ricci flow, for positive time, that attains smooth initial data in a weak sense must be smooth down to the initial time.
We obtain curvature estimates for an example where this fails that was given in \cite{TY3}. We prove a positive result in the case that the flow satisfies a lower $\K$ curvature bound, equivalent to a lower Ricci bound in three dimensions. As an application, we prove that Gromov-Hausdorff limits of WPIC1 manifolds are WPIC1.
\end{abstract}


\section{Introduction}

Suppose that $g(t)$ is a smooth complete Ricci flow on a manifold $M$ over a time interval 
$(0,T)$, that attains a smooth Riemannian metric $g_0$ on $M$ as initial data in the weak sense that 
\beq
\label{loc_unif_conv}
d_{g(t)}\to d_{g_0}\qquad\text{locally uniformly in }M\times M\text{ as }t\downto 0,
\eeq
where $d_g$ is the Riemannian distance on $M$ with respect to a metric $g$.
In this note we consider the question of when we can deduce that the extension of $g(t)$ to $[0,T)$ arising by setting $g(0):=g_0$ is smooth down to $t=0$.

A solution to this problem was found in \cite{TY3} when it was proved that there exists a smooth complete conformal  Ricci flow $g(t)$ on $\R^2$, for $t>0$, that starts with the Euclidean metric in the sense above, but which is not the static solution that remains as Euclidean space for all time. In other words, if $g_0$ is the standard Euclidean metric on $\R^2$ then \eqref{loc_unif_conv} holds for this non-static Ricci flow $g(t)$.
This provides an example of nonuniqueness of solutions of the Ricci flow in this setting.

This then also solves the following more general local version of the problem.
Consider a potentially incomplete Ricci flow $g(t)$ over a time interval $(0,T)$
on an $n$-dimensional smooth manifold $M$,  with the property that 
for some open set $\Om\subset M$, we have 
\beq
\label{loc_unif_conv2}
d_{g(t)}\to d_{0}\ \text{ uniformly in }\Om\times\Om \text{ as }t\downto 0,
\text{ for some $n$-Riemannian  metric }d_0\text{ on }\Om.
\eeq
Here, $d_0$ being $n$-Riemannian means that it is a distance metric such that every $x_0\in\Om$ has some neighbourhood in $(\Om,d_0)$ that is isometric to a smooth $n$-dimensional Riemannian manifold.
One then asks whether the flow extends smoothly to $t=0$ 
on $\Om$.

This question arises naturally when we construct a Ricci flow starting with initial data that admits singularities. Then the flow can do no more than attain the initial data in some weak sense. For example, one can consider Ricci solitons starting with a Riemannian cone, admitting the initial data in a metric sense, and consider whether the initial data is in fact attained smoothly away from the vertex of the cone.

This discussion begs the question of what additional hypotheses are required in order to ensure that the questions above can be answered in the affirmative. 
Perhaps the first result of this form is due to T. Richard \cite{TR}. His work handles 
the case of compact two-dimensional $M$ under the additional assumption that 
the Gauss curvature of the flow $g(t)$ is uniformly bounded below as $t\downto 0$.

\begin{thm}[{Implied by \cite[Proposition 3.1.4]{TR}}]
\label{TR_thm}
Suppose $(M,g_0)$ is a closed Riemannian surface and $g(t)$ is a smooth Ricci flow on $M$ for $t\in (0,T)$ with curvature uniformly bounded below, and with the property that \eqref{loc_unif_conv} holds. Then if we define $g(0):=g_0$ we obtain an extended Ricci flow that is smooth on $[0,T)$.
\end{thm}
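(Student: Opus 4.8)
The plan is to exploit the conformal invariance of Ricci flow in two dimensions. Fix $t_0\in(0,T)$ and use $g_*:=g(t_0)$ as a fixed smooth background metric; since the flow preserves its conformal class we may write $g(t)=e^{2u(\cdot,t)}g_*$ on $M\times(0,t_0]$ with $u(\cdot,t_0)=0$, where the conformal factor solves the scalar parabolic equation $\partial_t u=e^{-2u}(\Delta_* u-K_*)=-K_{g(t)}$, with $\Delta_*,K_*$ the Laplacian and Gauss curvature of $g_*$. The whole problem is thereby reduced to proving that $u$ extends smoothly to $t=0$, with $e^{2u(\cdot,0)}g_*=g_0$.

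First I would record the information coming for free from the hypotheses. The lower curvature bound $K_{g(t)}\ge -c_0$ says exactly $\partial_t u\le c_0$, which integrated backwards from $t_0$ gives the uniform lower bound $u\ge -c_0 t_0$ on $M\times(0,t_0]$: the metric does not collapse and stays uniformly bounded below by a fixed multiple of $g_*$. By Gauss--Bonnet the area $A(t)=A(t_0)+4\pi\chi(M)(t_0-t)$ is affine and hence bounded, so $\int_M e^{2u}\,dA_*$ is controlled; together with the uniform diameter bound extracted from \eqref{loc_unif_conv} this yields integral control on $u$ from above.

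The hard part, and the main obstacle, is upgrading this to a uniform pointwise upper bound $u\le C$, because the weak distance convergence \eqref{loc_unif_conv} only sees $u$ in an integrated sense and a priori permits thin spikes of the conformal factor that do not move distances appreciably. To rule these out I would use the parabolic smoothing of the conformal-factor equation: once non-collapse gives uniform parabolicity (the coefficient $e^{-2u}$ lies in a fixed interval $[\lambda,\Lambda]$), a local parabolic mean-value or Moser iteration for $\partial_t v=\Delta_*\log v-2K_*$, where $v=e^{2u}$ is the area density obeying a logarithmic fast-diffusion equation, converts the $L^1$-in-space area control into a local $L^\infty$ bound, giving $u\le C$ uniformly. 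With $|u|\le C$ in hand, the lower curvature bound reads as the one-sided Laplacian bound $\Delta_* u=K_*-K_{g(t)}e^{2u}\le C$, the equation is uniformly parabolic with bounded right-hand side, and Krylov--Safonov or De Giorgi--Nash--Moser estimates followed by a Schauder bootstrap produce interior-in-time $C^k$ estimates for $u$ on $M\times(0,t_0]$.

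Finally I would pass to $t=0$. The uniform estimates give smooth subconvergence of $g(t)$ as $t\downto 0$, and the distance convergence \eqref{loc_unif_conv} identifies every subsequential limit with $g_0$, since a distance metric determines its smooth Riemannian realization uniquely; hence $g(t)\to g_0$ smoothly. Alternatively, having established $g(t)\to g_0$ in $C^0$, one may invoke short-time existence to produce the genuine smooth flow issuing from $g_0$ and match it to $g(t)$ by a uniqueness or stability argument valid under the lower curvature bound. Either way, setting $g(0):=g_0$ yields a Ricci flow that is smooth on $[0,T)$. I expect the genuine difficulty to be concentrated in the upper $C^0$ bound on the conformal factor and in making the estimates uniform as $t\downto 0$, since this is precisely where the mismatch between weak distance convergence and honest tensorial convergence must be overcome.
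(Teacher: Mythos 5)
Your reduction to the conformal factor and the lower bound $u\geq -c_0t_0$ are fine, but the step you yourself flag as the crux---the uniform upper bound $u\leq C$---is circular as proposed. Non-collapse gives only $e^{-2u}\leq e^{2c_0 t_0}$, an \emph{upper} bound on the diffusion coefficient in $\partial_t u=e^{-2u}(\Delta_* u-K_*)$; uniform parabolicity additionally requires $e^{-2u}\geq\lambda>0$, i.e.\ $u\leq C$, which is exactly what you are trying to prove. Equivalently, the equation $\partial_t v=\Delta_*\log v-2K_*$ for $v=e^{2u}$ has diffusion coefficient $1/v$, which degenerates precisely at the spikes you want to exclude, and the logarithmic fast-diffusion equation has no $L^1\to L^\infty$ smoothing: mass can concentrate while areas stay controlled. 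The example of \cite{TY3} underlying Theorem \ref{TY_ext_thm} of this paper exhibits exactly this: areas of bounded regions stay bounded and $d_{g(t)}\to d_{g_0}$ uniformly, yet the conformal factor blows up along a line as $t\downto 0$. That example has curvature unbounded below, so it does not contradict the theorem, but it shows that area control plus distance convergence plus an upper bound on the diffusion coefficient cannot suffice; the curvature lower bound must enter in an essential way, and in your scheme it enters only through the (insufficient) lower bound on $u$.

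There is a second gap at the end. Even granting $|u|\leq C$, Krylov--Safonov and Schauder estimates are \emph{interior in time}: they give $C^k$ bounds on $M\times[\delta,t_0]$ that degenerate as $\delta\downto 0$, not bounds uniform down to $t=0$. So ``smooth subconvergence of $g(t)$ as $t\downto 0$'' does not follow, and the identification of the limit with $g_0$---which is the actual content of the theorem---remains unproven. Your alternative, matching $g(t)$ to the short-time flow issuing from $g_0$ by ``a uniqueness or stability argument'', assumes precisely the kind of statement being proved, and Theorem \ref{TY_ext_thm} shows such uniqueness genuinely fails without the curvature hypothesis. For comparison, the paper does not reprove Richard's result by conformal-factor estimates at all: its own route (see the remark following Theorem \ref{IC1_thm}) is to take a Cartesian product with $\R$, convert the Gauss curvature lower bound into a $\K$ lower bound, combine it with \eqref{loc_unif_conv} to get volume non-collapsing, invoke \cite[Lemma 2.1]{ST1} (or \cite{yi_lai}) to obtain $|\Rm|_{g(t)}\leq C/t$, and then hand the delicate time-zero identification to the Deruelle--Schulze--Simon Theorem \ref{DSS_thm}. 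It is these curvature-level estimates, rather than direct parabolic estimates on $u$, that allow the argument to close.
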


Without the assumption that the curvature is bounded below the result fails, as seen by a minor modification of the example of the second author and Yin given in \cite{TY3}.

More recently, the question in general dimension was considered by A. Deruelle, F. Schulze and M. Simon \cite{DSS}. They proved that if one assumes both a uniform lower Ricci curvature bound (analogous to the lower Gauss curvature bound of Theorem \ref{TR_thm}) and an upper curvature bound of the form $|\Rm|_{g(t)}\leq C/t$, then one can deduce time zero regularity.

\begin{thm}[{Implied by \cite[Theorem 1.6]{DSS}}]
\label{DSS_thm}
Suppose 
$g(t)$ is a smooth (possibly incomplete) Ricci flow on $M$ for $t\in (0,T)$ satisfying 
for some $C<\infty$ that 
\begin{compactenum}
\item
\label{cond_1}
$\Ric\geq  -C$ and 
\item
\label{cond_2}
$|\Rm|_{g(t)}\leq C/t$. 
\end{compactenum}
and with the property that \eqref{loc_unif_conv2} holds for some $\Om\subset M$. 
Then on $\Om$ we can extend $g(t)$ to a smooth Ricci flow on the time interval 
$[0,T)$.
\end{thm}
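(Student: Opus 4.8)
The plan is to reduce to a purely local statement and then upgrade the weak convergence of distances to smooth convergence of metrics. Fix $x_0\in\Om$; by hypothesis a neighbourhood of $x_0$ in $(\Om,d_0)$ is isometric to a smooth Riemannian manifold, so after passing to this chart I may assume that $d_0=d_h$ for a smooth metric $h$ defined near $x_0$ and that \eqref{loc_unif_conv2} reads $d_{g(t)}\to d_h$ uniformly. The goal becomes to show that $g(t)\to h$ in $C^\infty_{loc}$ as $t\downto0$; setting $g(0):=h$ then gives the desired smooth extension near $x_0$, and a covering argument globalises this over $\Om$.

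First I would extract a limit of the metrics using only the lower Ricci bound \partref{cond_1}. Writing this bound as $\Ric\geq -Cg$ and setting $A(t):=e^{-2Ct}g(t)$, the flow equation gives $\pt A=-2e^{-2Ct}(\Ric+Cg)\leq0$, so $A(t)$ is pointwise non-increasing in $t$ and therefore increases to a limit $g_0$ as $t\downto0$. Since $e^{2Ct}\geq1$ and $A$ is non-increasing, $g(t)=e^{2Ct}A(t)\geq A(t_1)=e^{-2Ct_1}g(t_1)>0$ for $t\leq t_1$, so $g_0$ is bounded below by a fixed positive-definite tensor, while the uniform convergence $d_{g(t)}\to d_h$ prevents the monotone limit from blowing up in any direction. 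Hence $g(t)$ converges, pointwise and monotonically, to a bounded positive-definite limit metric $g_0$ whose induced distance is $d_h$; it remains to show that $g_0=h$ and that the convergence is smooth.

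The crux, where the curvature bound \partref{cond_2} enters decisively, is to improve the a priori estimate $|\Rm|_{g(t)}\leq C/t$ to a uniform bound $|\Rm|_{g(t)}\leq C_0$ on a fixed parabolic neighbourhood of $(x_0,0)$. I would argue by contradiction: if no such bound held there would be $(x_i,t_i)$ with $t_i\downto0$, $x_i\to x_0$ and $Q_i:=|\Rm|_{g(t_i)}(x_i)\to\infty$, and after Perelman's parabolic point selection I may assume the curvature stays controlled on a backward parabolic region. Form the rescalings $\ti g_i(s):=Q_i\,g\big(t_i+Q_i^{-1}s\big)$; by \partref{cond_2} these obey $|\Rm|_{\ti g_i}\leq C/(Q_it_i+s)$ with $Q_it_i\leq C$, giving genuine two-sided curvature bounds on a fixed neighbourhood of $s=0$, while \partref{cond_1} rescales to $\Ric_{\ti g_i}\geq -C/Q_i\to0$. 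The essential analytic input is non-collapsing: because $g_0$ is non-degenerate with $d_{g_0}=d_h$ and $\Ric\geq -C$, small balls near $x_0$ retain a lower volume ratio for small $t$, yielding a lower injectivity-radius bound at scale $Q_i^{-1/2}$, so Hamilton's compactness theorem produces a smooth pointed limit flow $\ti g_\infty$ with $|\Rm|_{\ti g_\infty}(x_\infty,0)=1$. But the uniform distance convergence together with the distance-distortion control furnished by \partref{cond_2} forces the time-zero slice of $\ti g_\infty$ to have the distance function of Euclidean $\R^n$ --- the tangent cone of the smooth space $(M,d_h)$ at $x_0$ --- and hence to be flat, contradicting $|\Rm|_{\ti g_\infty}(x_\infty,0)=1$. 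Verifying the non-collapsing and the Euclidean structure of the blow-up at scale $Q_i^{-1/2}$ is the step I expect to be the main obstacle, and it is precisely here that the two hypotheses are used together.

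With the uniform bound $|\Rm|_{g(t)}\leq C_0$ established, Shi's local derivative estimates control $|\grad^k\Rm|_{g(t)}$ on slightly smaller neighbourhoods, and Hamilton--Cheeger--Gromov compactness (or a direct harmonic-coordinate argument) upgrades the convergence $g(t)\to g_0$ to $C^\infty_{loc}$ as $t\downto0$; the limit is then a smooth metric with distance $d_h$, so $g_0=h$. Finally, to see that the extension $g(0):=h$ is a smooth Ricci flow on $[0,\tau]$ I would pass to the Ricci--DeTurck gauge relative to the fixed background $h$ and apply interior parabolic Schauder estimates: in this gauge the evolution has uniformly bounded coefficients and attains the smooth initial data $h$ continuously, hence is smooth up to $t=0$. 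Equivalently, one may invoke a uniqueness theorem for bounded-curvature Ricci flows attaining continuous initial data to identify $g(t)$ with the flow produced by Shi's short-time existence from $h$. Undoing the localisation then extends $g(t)$ to a smooth Ricci flow on $[0,T)$ over all of $\Om$, as required.
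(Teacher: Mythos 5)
First, a point of comparison: the paper does not prove this theorem from scratch at all --- it is imported wholesale from \cite[Theorem 1.6]{DSS}, and the only new content the paper supplies is Remark \ref{tech_rmk}, which converts the distance-convergence hypothesis \eqref{loc_unif_conv2} into the ball-containment hypotheses that \cite{DSS} assumes explicitly. Your proposal instead attempts to re-prove the Deruelle--Schulze--Simon theorem itself, and it has a genuine gap at exactly the step you flag as ``the main obstacle''. In your blow-up argument the contradiction rests on the claim that a pointed limit flow $\ti g_\infty$, whose initial data is Euclidean $\R^n$ in the locally-uniform distance sense, must be flat at the later time where $|\Rm|_{\ti g_\infty}=1$. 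That implication is itself an instance of the time-zero-regularity-plus-rigidity problem the theorem addresses, so as written the argument is circular; worse, the paper's own Theorem \ref{TY_ext_thm} shows the implication is \emph{false} in general: there is a non-static complete Ricci flow on $\R^2$ attaining the Euclidean metric as initial data in the sense of \eqref{loc_unif_conv} while satisfying $|K_{g(t)}|\leq 1/(2t)$ --- precisely the curvature decay your rescaled flows enjoy. Any correct proof of flatness of the blow-up must therefore make essential, quantitative use of the rescaled Ricci bound $\Ric_{\ti g_i}\geq -(C/Q_i)\,\ti g_i$ (for instance via volume convergence to the Euclidean value combined with Perelman's pseudolocality, or an Anderson-type gap argument); recording that the limit has $\Ric\geq 0$ and asserting ``hence flat'' skips the entire core of the matter.

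There are secondary gaps as well. (i) Your rescaled flows exist for $s\in(-Q_it_i,0]$, and while $Q_it_i\leq C$, nothing in your point-selection prevents $Q_it_i\to 0$, in which case the limit flow has no backward time interval and no initial slice to compare with a tangent cone; the selection must be engineered to rule this out. (ii) The assertion that uniform convergence $d_{g(t)}\to d_h$ forbids the monotone limit $g_0=\lim_{t\downto 0}e^{-2Ct}g(t)$ from blowing up ``in any direction'' is not automatic: pointwise blow-up of the tensor on a small (say measure-zero) set is a priori compatible with uniform convergence of distances, so this needs an argument. (iii) The proposed shortcut of invoking ``a uniqueness theorem for bounded-curvature Ricci flows attaining continuous initial data'' is unavailable: the known uniqueness theorems (Hamilton, Chen--Zhu, Chen) require complete flows, your flows are local and incomplete, and non-uniqueness of local flows without boundary conditions is exactly the phenomenon this paper is about. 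The paper's route --- verifying the hypotheses of \cite[Theorem 1.6]{DSS} via Remark \ref{tech_rmk} and citing that result --- sidesteps all of these difficulties, which is why the statement is labelled as ``implied by'' rather than proved.
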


The theorem above, unlike \cite[Theorem 1.6]{DSS}, does not explicitly assume that any time $t$ balls are compactly contained in $M$ or $\Om$. However, we can deduce such a statement from the distance convergence of assumption \eqref{loc_unif_conv2}, as clarified in Remark \ref{tech_rmk}.

The work of Deruelle-Schulze-Simon has prompted the question of whether merely the upper $C/t$ curvature bound is sufficient to deduce the conclusion of smoothness of the flow at time $t=0$ (cf. \cite[Problem 1.1]{DSS} and \cite[Problem 1.8]{DSS}). 
Our first result is that the example of \cite{TY3} already disproves this.
\begin{thm}[Extends \cite{TY3}]
\label{TY_ext_thm}
There exists a  Ricci flow $g(t)$ on $\R^2$, for $t>0$, achieving 
the standard Euclidean metric $g_0$ on $\R^2$ as initial data in the sense of \eqref{loc_unif_conv}, which satisfies the curvature estimate $|K_{g(t)}|\leq \frac{1}{2t}$ for all $t\in (0,T)$,
but which is not the static Ricci flow $g(t)\equiv g_0$.
\end{thm}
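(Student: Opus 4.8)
The flow itself is already in hand: \cite{TY3} provides a smooth \emph{complete} conformal Ricci flow $g(t)=e^{2u}g_0$ on $\R^2$, $t>0$, that is not the static flow and that attains $g_0$ in the sense of \eqref{loc_unif_conv}. The only new assertion is therefore the curvature bound, and since in two dimensions the entire curvature is carried by the Gauss curvature $K=K_{g(t)}$ (with scalar curvature $R=2K$), it suffices to establish the two-sided estimate $-\tfrac1{2t}\le K\le\tfrac1{2t}$. The plan is to prove the two inequalities by entirely different means: the lower bound is a general feature of any complete Ricci flow and carries all the content, whereas the upper bound is essentially free from the sign of the curvature of this particular example.

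For the lower bound, recall that under the flow $\pt R=\Delta_{g(t)}R+R^2$ in dimension two, and that $\phi(t)=-1/t$ is an exact solution of the associated ODE $\dot\phi=\phi^2$. Because the flow of \cite{TY3} is complete and has curvature bounded on each time slice (it is asymptotically Euclidean), the maximum principle applies in the sharp form valid for complete Ricci flows due to B.-L.\ Chen, yielding $R\ge-\tfrac{n}{2t}=-\tfrac1t$ and hence $K\ge-\tfrac1{2t}$; crucially this uses nothing about the data at $t=0$. This bound is sharp for the example: its curvature is \emph{unbounded below} as $t\downto 0$, which is precisely the mechanism by which the flow evades the uniform lower Ricci hypothesis (condition (\ref{cond_1})) of Theorem \ref{DSS_thm} while still furnishing nonuniqueness under the $C/t$ bound alone.

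For the upper bound, pass to the conformal factor $v=e^{2u}$, for which the flow is the logarithmic fast diffusion $\pt v=\Delta\log v$ (Euclidean $\Delta$) and a direct computation gives $K=-\tfrac12\pt\log v$. Thus $K\le\tfrac1{2t}$ is equivalent to the Aronson--B\'enilan-type monotonicity $\pt(tv)\ge0$. Since the non-static flow emanating from flat data gains area --- equivalently, it is non-positively curved, consistent with the very negative curvature forced near $t=0$ by the previous paragraph --- one has $K\le0\le\tfrac1{2t}$ and the upper bound is immediate. Should one instead want a self-contained and quantitative route to the full statement, one can use the self-similar ansatz $v(r,t)=V(r/\sqrt t)$ respecting the parabolic scaling $(r,t)\mapsto(\lambda r,\lambda^2 t)$: here $tK=\tfrac14\,\eta\,(\log V)'(\eta)$ with $\eta=r/\sqrt t$, so $|K|\le\tfrac1{2t}$ is exactly the profile bound $|\eta(\log V)'(\eta)|\le2$, read off from the profile ODE together with its boundary behaviour ($V\to1$ as $\eta\to\infty$ and a smooth cap at $\eta=0$).

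The main obstacle is the sharp lower bound on a complete, noncompact flow. Unlike the upper bound, $K\ge-\tfrac1{2t}$ is genuinely nontrivial: the naive maximum principle fails for the reverse inequality (the reaction term $R^2$ pushes $R$ \emph{upward} through $1/t$), and rigour requires justifying Chen's estimate for the flow at hand --- that is, verifying completeness and per-slice curvature boundedness so that the barrier $-1/t$ may be deployed on $\R^2$ for $t>0$. The remaining points --- completeness, non-staticity, and the convergence \eqref{loc_unif_conv} --- are inherited directly from \cite{TY3}, and one finally records that the curvature is unbounded below, so that the example indeed fails condition (\ref{cond_1}), confirming that the $C/t$ bound alone cannot force regularity down to $t=0$.
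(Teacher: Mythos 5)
Your lower bound is fine and matches the paper: Chen's estimate gives $K\geq-\tfrac{1}{2t}$ for \emph{any} complete Ricci flow on a surface (in fact it needs only completeness, not the per-slice curvature boundedness you worry about). The genuine gap is the upper bound, and your argument for it is wrong. The example flow is the one evolving from the measure $\mu_{g_0}+\h^1\llcorner L$, where $L$ is the $y$-axis: the extra area concentrated on the line makes the curvature strictly \emph{positive} near $L$. Indeed, writing $g(t)=u\,(dx^2+dy^2)$ with $u$ independent of $y$, one has $K=-\tfrac{1}{2u}(\log u)_{xx}$, and at the maximum of $u$ in $x$ (on the line, by symmetry) this is $\geq 0$; for the closely related soliton $u=\tfrac{2t}{t^2+x^2}$ one computes $K=+\tfrac{1}{2t}$ at $x=0$, which is exactly why the paper calls the upper bound sharp. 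So the claim ``the flow gains area, hence is non-positively curved, hence $K\leq 0\leq\tfrac{1}{2t}$'' fails outright: the upper bound is the hard half, not the free half. Your fallback via the rotationally symmetric self-similar ansatz $v(r,t)=V(r/\sqrt t)$ also does not apply: the example is invariant under $y$-translations, not rotations, and is not self-similar (it mixes the flat background with the line measure); no such rotationally symmetric nonuniqueness example is constructed anywhere, and you give no construction.

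What is actually needed --- and what the paper does (Theorem \ref{sym_thm}) --- is a maximum-principle argument applied not to the curvature but to the \emph{pressure}. You correctly note that $\pt R=\lap R+R^2$ has the wrong sign to bound $R$ from above; the fix is to set $w=1/u$ and $q=w_{xx}$, for which a direct computation (using the $y$-invariance in an essential way) gives $q_t=\lap_g q - q^2$, now with the \emph{good} sign on the reaction term. The maximum principle for complete flows (the paper cites \cite[Lemma 5.1]{HLTT}, which builds on Perelman's and Chen's distance-function cutoff technique and requires no curvature or growth assumptions) then yields $q\leq 1/t$, and the identity $2K=-\tfrac{1}{u}\left|(\log u)_x\right|^2+q\leq q$ delivers $K\leq\tfrac{1}{2t}$. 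You did correctly reformulate the upper bound as the Aronson--B\'enilan-type inequality $\pt(tv)\geq 0$, but you never prove it; the pressure computation above is precisely the missing step, and it is where the translation invariance of the example enters.
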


The flow $g(t)$ arises by evolving the Radon measure 
$\mu_{g_0}+\h^1\llcorner L$, where $\mu_{g_0}$ is the Lebesgue measure and $L=\{0\}\times \R$ is the $y$-axis in $\R^2$,
using the existence theory of \cite{TY3}.
The novelty compared with \cite{TY3} is that the we are demonstrating that the resulting flow satisfies a curvature bound of the form $|\Rm|_{g(t)}\leq C/t$, even with an explicit constant. Note that the scalar curvature lower bound of B.L. Chen \cite{strong_uniqueness}
tells us that $K\geq -\frac{1}{2t}$ for any complete Ricci flow on any surface. 
We prove in this note the analogous upper bound that 
$$K\leq \frac{1}{2t},$$
not just for the flow above but for any complete flow on $\R^2$ that is invariant under vertical translations.

\begin{thm}
\label{sym_thm}
Suppose that $g(t)=u(x,t)(dx^2+dy^2)$ is any complete Ricci flow on $\R^2$, 
for $t\in (0,T)$,
whose conformal factor is independent of $y$.
Then 
$$|K_{g(t)}|\leq \frac{1}{2t}.$$
\end{thm}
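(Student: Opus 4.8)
The plan is to reduce everything to a single scalar heat equation. Writing $g(t)=u(x,t)(dx^2+dy^2)$ with $u$ independent of $y$, the Ricci flow equation becomes the one–dimensional logarithmic diffusion equation $u_t=(\log u)_{xx}$, and the Gauss curvature is $K=-\frac{1}{2u}(\log u)_{xx}=-\half(\log u)_t$. I would first record the two clean reformulations this gives. On the one hand, setting $\phi:=\sqrt u=|\partial_y|_{g(t)}$ and letting $s$ be the $g(t)$-arclength along $\{y=\mathrm{const}\}$, the metric takes the warped–product form $ds^2+\phi^2\,dy^2$, so that $K=-\phi_{ss}/\phi$, while completeness forces $\phi>0$ on the whole line $s\in\R$ for each fixed $t$. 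On the other hand, a direct computation shows that $h:=\log\phi=\half\log u$ solves the heat equation coupled to the flow,
\[
\partial_t h=\Delta_{g(t)}h=-K,
\]
so that the desired estimate $K\le\tfrac{1}{2t}$ is exactly the lower Laplacian bound $\Delta_{g(t)}h\ge-\tfrac{1}{2t}$. The companion bound $K\ge-\tfrac{1}{2t}$ is the general surface estimate of Chen quoted above (equivalently $(\log u)_t\le 1/t$, which follows from the exact barrier $-\tfrac{1}{2t}$ for the curvature ODE $\dot K=2K^2$), so only the upper bound requires the symmetry.

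The upper bound is a sharp Aronson--B\'enilan / Li--Yau-type estimate in which the effective dimension is $1$: the constant $\tfrac{1}{2t}$ is precisely the one–dimensional Li--Yau constant, and it is the translation symmetry that lowers the effective dimension from $2$ to $1$ (in genuine dimension two no such upper curvature bound can hold, consistent with the non-uniqueness phenomena discussed above). The mechanism is visible on the quantity $P:=h_{ss}$. Using the Ricci-flow commutator $[\partial_t,\partial_s]=K\partial_s$ together with $\partial_t h=h_{ss}+h_s^2$, I would compute that $P$ satisfies the \emph{linear} equation
\[
\partial_t P=\Delta_{g(t)}P-4\,h_s^2\,P,
\]
whose zeroth–order coefficient $-4h_s^2$ is nonpositive. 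Hence at a spatial minimum of $P$ with $P<0$ one has $\partial_t P\ge -4h_s^2P\ge 0$, so $\min_s P$ is nondecreasing as long as it is negative; since $\Delta_{g(t)}h=P+h_s^2\ge P$, this already forces a (non-sharp) two-sided curvature bound for positive times. The role of the symmetry is that the bare curvature equation $\partial_t K=\Delta_{g(t)}K+2K^2$ has the wrong-signed reaction term $+2K^2$, so $K$ itself obeys no maximum principle from above; passing to $P=h_{ss}$ trades this for a favorable linear reaction.

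To promote this into the \emph{sharp} time-dependent rate I would run the maximum principle against the barrier $-\tfrac{1}{2t}$ for $P$ on all of $\R$, feeding in two global inputs. First, Chen's bound gives $\Ric_{g(t)}\ge-\tfrac{1}{2t}$, so the surface is complete with Ricci bounded below and the maximum principle can be run at infinity. Second, and crucially, completeness means $\phi>0$ solves the Jacobi equation $\phi_{ss}+K\phi=0$ on the entire line, so by Sturm comparison $K$ cannot stay $\ge\tfrac{1}{2t}$ across an interval of length $\ge\pi\sqrt{2t}$ without forcing $\phi$ to vanish; this is exactly the spatial scale produced by the parabolic evolution, and it is what pins the constant. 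That completeness is indispensable rather than merely convenient is shown by the explicit \emph{incomplete} solution $u=(d-2t)\,\mathrm{sech}^2x$, for which $K\equiv\tfrac{1}{d-2t}$ exceeds $\tfrac{1}{2t}$ for every $t>d/4$.

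The step I expect to be the main obstacle is precisely this last one: upgrading the linear, favorably-signed estimate for $P=h_{ss}$ to the sharp pointwise rate $\tfrac{1}{2t}$. The favorable reaction $-4h_s^2P$ degenerates exactly where $|\nabla h|=|h_s|$ is small, so in those regions the maximum principle yields nothing by itself and the bound must instead be extracted from the global positivity of the Killing-field length $\phi$. Marrying the local parabolic estimate with this global Sturm/spectral information, while keeping the correct constant, is the delicate heart of the argument.
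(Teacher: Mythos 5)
Your setup is correct and your key computations check out: with $h=\frac{1}{2}\log u$ and $s$ the $g(t)$-arclength one indeed has $\partial_t h=\Delta_{g(t)}h=-K$, the commutator $[\partial_t,\partial_s]=K\partial_s$, and $P=h_{ss}$ really does satisfy $\partial_t P=\Delta_{g(t)}P-4h_s^2P$. But the proof has a genuine gap exactly where you flag it, and the obstacle is structural, not technical: a \emph{linear} parabolic equation has no intrinsic time scale, so no maximum principle applied to $P$ alone can ever produce the rate $1/t$. The most it could give is propagation of a lower bound on $P$ forward from some time $t_0>0$; but since the flow is only given for $t>0$ with no control as $t\downto 0$, and since a lower bound on $P(\cdot,t_0)=-K-h_s^2$ is an \emph{upper} curvature-type bound (precisely what is being proven), there is no time at which such a bound is available to propagate --- so even your ``non-sharp two-sided bound'' does not follow, quite apart from the unaddressed issue of the minimum being attained on a noncompact surface. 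The Sturm comparison observation is correct but only bounds the $s$-length of the super-level sets $\{K\geq\tfrac{1}{2t}\}$; you give no mechanism for converting that into a pointwise bound, and I do not see one.

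The fix is very close to what you wrote: the right quantity is not $P=h_{ss}$ but $A:=h_{ss}-h_s^2$, equivalently (in the conformal coordinate $x$) the second derivative $q:=w_{xx}$ of the pressure $w=1/u$, since $q=-2A$. Redoing your computation with this correction term, the linear reaction $-4h_s^2P$ is traded for a quadratic one: $\partial_t A=\Delta_{g(t)}A+2A^2$, i.e.\ $\partial_t q=\Delta_{g(t)}q-q^2$. The absorption term $-q^2$ is exactly what forces the decay $q\leq 1/t$ by comparison with the ODE solution $t\mapsto 1/t$, and then $2K=q-\tfrac{1}{u}|(\log u)_x|^2\leq q\leq 1/t$ gives the sharp bound. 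This is the paper's proof, phrased there via the pressure. The remaining issue --- justifying the comparison on a complete noncompact surface with nothing assumed at spatial infinity, which you propose to handle by ``running the maximum principle at infinity'' using Chen's degenerating Ricci lower bound $-\tfrac{1}{2t}$ --- also needs a real argument: the paper invokes a localization lemma of Huang--Lee--Tam--Tong (distilled from B.-L.\ Chen's technique, resting on Perelman's properties of the distance function under Ricci flow), which applies to any smooth $q$ satisfying $\partial_t q\leq\Delta_{g(t)}q-\al q^2$ on a complete flow and yields $q\leq 1/(\al t)$. That lemma requires the quadratic absorption term too, so both missing pieces of your argument are supplied at once by replacing $h_{ss}$ with $h_{ss}-h_s^2$.
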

Without the hypothesis that $u$ is independent of $y$, the curvature can even be unbounded for some or all time \cite{GT3, GT4}.
The explicit upper and lower bounds for the curvature given in Theorem \ref{sym_thm} are both sharp. Indeed, both bounds are sharp at every time $t>0$ in the non-gradient expanding Ricci soliton starting with the Radon measure $2\pi\h^1\llcorner L$,
discovered in \cite{TY3}, whose conformal factor is given explicitly as
$$u(x,t)=\frac{2t}{t^2+x^2}.$$
We prove Theorem \ref{sym_thm} in Section \ref{press_sect}.

Returning to Theorem \ref{DSS_thm} of Deruelle-Schulze-Simon, having disproved that condition \eqref{cond_1} can be simply dropped, we ask whether we can drop condition 
\eqref{cond_2} instead. 
We have already noted that a result of this form holds in dimension two, owing to Theorem \ref{TR_thm}.
It turns out that in dimension three this is also true, even locally. 
We will state and prove this assertion in greater generality below, but the idea of proof is as follows:
A uniform lower Ricci bound, coupled with the weak attainment of the initial data, is already enough to rule out volume collapsing of the flow. This is then enough to deduce the $|\Rm|_{g(t)}\leq C/t$ curvature decay over short time periods, and hence reduce to the case of Deruelle-Schulze-Simon. 
In higher dimensions it remains open whether condition \eqref{cond_2} of Theorem \ref{DSS_thm} can be dropped. However, we demonstrate in Section \ref{pos_sect} that the argument above remains valid if we replace the uniform Ricci lower bound hypothesis by a lower bound on a different type of averaging of sectional curvatures
that coincides with a lower bound on the Ricci curvature in three dimensions and is stronger in dimension four and higher. 
More precisely we ask for a uniform lower bound on  all complex sectional curvatures corresponding to degenerate 2-planes, which we write $\K\geq -C$, 
and refer to \cite[Sections 1 and 2]{MT2} for clarification of notation and further details. In the case that 
$\K >0$, 
i.e.  all these averages are strictly positive, the corresponding curvature condition is known as PIC1. This notion originates in the work of Micallef-Moore \cite[\S 5]{MM} and became famous after its use by Brendle and Schoen in the context of the differentiable sphere theorem \cite{BS}.

The result is then the following theorem, in which no metrics are assumed to be complete.
\begin{thm}
\label{IC1_thm}
Suppose $g(t)$ is a smooth (possibly incomplete) Ricci flow on a smooth manifold $M$,
of dimension at least three, for $t\in (0,T)$
that satisfies 
$$\K\geq  -\al_0\quad\text{ on }M\times (0,T),$$
for some $\al_0>0$, and with the property that \eqref{loc_unif_conv2} holds for some $\Om\subset M$. 
Then on $\Om$ we can extend $g(t)$ to a smooth Ricci flow on the time interval 
$[0,T)$.
\end{thm}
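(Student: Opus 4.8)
The plan is to reduce to the theorem of Deruelle-Schulze-Simon (Theorem \ref{DSS_thm}) by upgrading the single lower curvature hypothesis into the two hypotheses \eqref{cond_1} and \eqref{cond_2} required there. The bound $\K\geq -\al_0$ is by design a lower bound on an averaging of sectional curvatures that is at least as strong as a lower Ricci bound; in particular it forces $\Ric\geq -C$ on $M\times(0,T)$ for some $C=C(n,\al_0)$, so \eqref{cond_1} holds automatically. Since the conclusion --- smoothness of the extended flow down to $t=0$ --- is local in space and concerns only a neighbourhood of the initial time, it suffices to fix an arbitrary $x_0\in\Om$ and produce a smaller open set $x_0\in\Om'\Subset\Om$ and a time $\tau\in(0,T]$ on which $|\Rm|_{g(t)}\leq C'/t$ holds. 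Theorem \ref{DSS_thm} then gives smoothness on $\Om'$ near $t=0$, and patching over $x_0\in\Om$, together with the smoothness already assumed for $t>0$, yields the claim.

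It remains to establish \eqref{cond_2} locally, and the engine for this is a uniform noncollapsing estimate as $t\downto0$. First I would fix $x_0\in\Om$ and $r>0$ small enough that the $d_0$-ball $B_{d_0}(x_0,2r)$ is compactly contained in $\Om$ and, by the definition of $d_0$ being $n$-Riemannian, isometric to a smooth Riemannian ball; in particular $\mathcal{H}^n\big(B_{d_0}(x_0,r)\big)\geq v_0>0$. By the uniform distance convergence \eqref{loc_unif_conv2}, the pointed spaces $(\Om,d_{g(t)},x_0)$ converge to $(\Om,d_0,x_0)$ as $t\downto0$, and the lower Ricci bound $\Ric\geq -C$ lets me invoke the local volume convergence theory of Cheeger-Colding: because the limit $d_0$ is genuinely $n$-dimensional, the convergence cannot be collapsed, whence
$$\liminf_{t\downto0}\Vol_{g(t)}\big(B_{g(t)}(x_0,r)\big)\geq v_0>0.$$
Thus there are $\tau_0\in(0,T)$ and $v_1>0$ with $\Vol_{g(t)}(B_{g(t)}(x_0,r))\geq v_1$ for all $t\in(0,\tau_0)$. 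The same distance convergence guarantees that the relevant balls are compactly contained in $M$ for small $t$, as in Remark \ref{tech_rmk}, so the local estimates below are legitimate despite possible incompleteness.

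The crux is then a local curvature estimate: a Ricci flow satisfying $\K\geq -\al_0$ that is noncollapsed at a fixed scale near the initial time must obey $|\Rm|_{g(t)}\leq C'/t$ for a short time on a slightly smaller ball. This is the all-dimensional counterpart of the three-dimensional estimates of Simon-Topping, and rests essentially on $\K\geq -\al_0$ being a curvature condition invariant under the flow, combined with the noncollapsing secured above; I would invoke it in the form available through \cite{MT2}. Applied with the noncollapsing from the previous step, it yields $|\Rm|_{g(t)}\leq C'/t$ on $B_{g(t)}(x_0,r/2)$ for $t\in(0,\tau)$ with some $\tau\leq\tau_0$. With both \eqref{cond_1} and \eqref{cond_2} now in hand on a neighbourhood $\Om'$ of $x_0$ over $(0,\tau)$, Theorem \ref{DSS_thm} extends the flow smoothly to $\Om'\times[0,\tau)$, and the reduction above completes the proof.

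The step I expect to be the main obstacle is the passage from distance (Gromov-Hausdorff) convergence to a genuine volume lower bound. Distance convergence a priori controls only the shape of balls, not their Riemannian volume in the moving metric, and it is precisely the lower Ricci bound --- through the noncollapsing half of the Cheeger-Colding volume-continuity dichotomy --- that converts the $n$-dimensionality of the smooth limit $d_0$ into quantitative noncollapse. Carrying this out cleanly, uniformly as $t\downto0$ and compatibly with the locality and possible incompleteness of the flow, is the delicate point; once noncollapse is secured, the $C/t$ bound and the reduction to Deruelle-Schulze-Simon are comparatively mechanical.
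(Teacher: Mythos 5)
Your reduction to Theorem \ref{DSS_thm} has exactly the paper's skeleton: $\K\geq-\al_0$ gives \eqref{cond_1} for free, so everything hinges on a local noncollapsing estimate, which feeds an a priori local curvature-decay estimate to produce \eqref{cond_2}, after which Deruelle--Schulze--Simon finishes. The one place you genuinely diverge is the noncollapsing step, and that is also where your argument has a real gap: you route it through the Cheeger--Colding collapse/dimension-drop dichotomy applied to the family $(\Om,d_{g(t)})$ as $t\downto 0$. That theory is developed for complete manifolds; here the flow may be incomplete, so you would need a genuinely local version of the dichotomy, uniform as $t\downto 0$, which you flag as ``the delicate point'' but never supply. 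This is not a routine citation: the paper itself, when it invokes Colding's volume convergence in the (different) proof of Theorem \ref{WPIC1_lim_thm}, has to remark explicitly that the complete theory must be ``extended'' to the local situation. So as written, the crux of your proof rests on an unproved lemma.

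What you are missing is that the Ricci flow equation makes all comparison geometry unnecessary here. Since $\Ric\geq -C$ implies $\RS\geq -nC$ for the scalar curvature, the volume form of a \emph{fixed} region satisfies $\partial_t\, d\mu_{g(t)}=-\RS\, d\mu_{g(t)}\leq nC\, d\mu_{g(t)}$; integrating backwards in time from any fixed $t_1\in(0,T)$ gives, for the fixed set $A=B_{d_0}(x_0,r/2)$,
$$\Vol_{g(t)}(A)\;\geq\; e^{-nC(t_1-t)}\,\Vol_{g(t_1)}(A)\;=:\;v_0\;>\;0\qquad\text{for all }t\in(0,t_1].$$
Combined with the inclusion $B_{d_0}(x_0,r/2)\subset B_{g(t)}(x_0,r)$ from Remark \ref{tech_rmk} (which you already use), this yields $\VolB_{g(t)}(x_0,r)\geq v_0$ uniformly as $t\downto 0$, with no appeal to Cheeger--Colding; this elementary observation is precisely what the paper means by ``a lower Ricci bound coupled with weak attainment of the initial data rules out collapsing.'' Two smaller corrections: the local $C/t$ estimate you want is not really in \cite{MT2}; in three dimensions it is Lemma 2.1 of \cite{ST1}, and in higher dimensions under $\K\geq -C$ it is Lai's \cite[Lemma 3.4]{yi_lai} (built on Bamler--Cabezas-Rivas--Wilking). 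Moreover, since those lemmas are stated for flows with controlled data at their initial time, one applies them to the shifted flows $t\mapsto g(t+\ep)$ and lets $\ep\downto 0$, a detail worth recording since your flow is only defined for $t>0$.
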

\begin{rem}
Although this theorem is stated for dimension at least three,  given a Ricci flow on a surface with Gauss curvature uniformly bounded below such that \eqref{loc_unif_conv2} holds for some open subset $\Om$ of the underlying surface,  
we can take a Cartesian product with $\R$ and apply Theorem \ref{IC1_thm} to deduce that the flow can be extended smoothly to $t=0$ on $\Om$. This gives 
a generalisation of Theorem \ref{TR_thm} to the noncompact case.
\end{rem}

As discussed before Theorem \ref{IC1_thm}, our method of proof will be to reduce to 
Theorem \ref{DSS_thm} of Deruelle-Schulze-Simon.

Time zero regularity results such as Theorem \ref{IC1_thm} have applications beyond the theory of Ricci flow itself. 
We consider now what it says about the preservation of curvature conditions under 
Gromov-Hausdorff convergence. We say a manifold is WPIC1 (weakly PIC1) if $\K\geq 0$ throughout.

\begin{thm}
\label{WPIC1_lim_thm}
For $n\geq 3$, suppose that $(M_i,g_i)$ is a sequence of (not necessarily complete) $n$-dimensional WPIC1 Riemannian manifolds 
such that for all $i$ we have $x_i\in M_i$ and
$B_{g_i}(x_i,1)\subset\subset M_i$.
Suppose that $(B_{g_i}(x_i,1), d_{g_i},x_i)$ converges in the pointed Gromov-Hausdorff sense to an $n$-Riemannian limit $(X,d,p)$.
Then this limit is WPIC1 at $p$.
\end{thm}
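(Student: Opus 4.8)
The plan is to smooth each manifold by Ricci flow, extract a limit flow, and then invoke the time-zero regularity of Theorem \ref{IC1_thm} to pass the WPIC1 condition to the limit. A direct argument is hopeless because Gromov--Hausdorff convergence is only a $C^0$ statement about distances, whereas $\K\geq 0$ is a condition on the second-order curvature operator; the Ricci flow is precisely the device that upgrades the convergence from $C^0$ to $C^\infty$.

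First I would run, for each $i$, a Ricci flow $g_i(t)$ starting from $g_i$ on an interior region of $B_{g_i}(x_i,1)$, existing on a common time interval $[0,\tau)$ with $\tau>0$ independent of $i$. Since the condition $\K\geq 0$ is preserved by Ricci flow, each $g_i(t)$ satisfies $\K\geq 0$, and in particular $\K\geq -\al_0$ for any fixed $\al_0>0$. The hypothesis that the limit is $n$-Riemannian supplies a uniform lower volume bound for the balls $B_{g_i}(x_i,r)$, so the sequence does not collapse; as explained in the discussion preceding Theorem \ref{IC1_thm}, this non-collapsing together with the lower curvature bound persists to positive times and forces the uniform decay $|\Rm|_{g_i(t)}\leq C/t$ with $C$ independent of $i$. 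The uniform local existence and the preservation of $\K\geq 0$ in this incomplete setting are the first technical point, to be supplied by a suitable existence theory for WPIC1 initial data.

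With the uniform bounds $|\Rm|_{g_i(t)}\leq C/t$ and the non-collapsing in hand, Hamilton's compactness theorem for Ricci flows yields a subsequential limit flow $g_\infty(t)$ on $\Om\times(0,\tau)$, where $\Om$ is the interior of the Gromov--Hausdorff limit ball. This limit flow again satisfies $\K\geq 0$, so is WPIC1 at each positive time, and $|\Rm|\leq C/t$. Moreover the distance-distortion estimates available from $\Ric\geq -C$ together with $|\Rm|\leq C/t$ are uniform in $i$, so $d_{g_i(t)}\to d_{g_i}$ uniformly as $t\downto 0$; combined with the hypothesis $d_{g_i}\to d_0$ this gives $d_{g_\infty(t)}\to d_0$, i.e.\ \eqref{loc_unif_conv2} holds for $g_\infty(t)$ on $\Om$.

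Applying Theorem \ref{IC1_thm} (with any $\al_0>0$) then extends $g_\infty(t)$ to a flow smooth on $[0,\tau)$, so that $g_\infty(t)\to g_\infty(0)$ in $C^\infty_{\mathrm{loc}}$ and $g_\infty(0)$ is a smooth metric realising the $n$-Riemannian limit $d_0$. Because the curvature operator depends continuously on the metric in the smooth topology and each $g_\infty(t)$ is WPIC1, the limit $g_\infty(0)$ is WPIC1; as it is isometric to the Gromov--Hausdorff limit, the latter is WPIC1, as required. I expect the main obstacle to be the content of the second paragraph: producing the flows with uniform existence time and preserved $\K\geq 0$ for incomplete metrics, and deriving the non-collapsing and $C/t$ bounds uniformly in $i$, since the underlying maximum-principle and volume arguments are delicate without completeness and must be controlled up to the boundary of the balls.
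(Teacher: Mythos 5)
Your overall architecture (flow each approximating manifold, extract a smooth limit flow for $t>0$, invoke Theorem \ref{IC1_thm} for time-zero regularity, and pass the curvature condition down to $t=0$) is the same as the paper's. But there is a genuine gap at the step you flag as a ``technical point'' and hope an existence theory will supply: the claim that $\K\geq 0$ is preserved by the local Ricci flows $g_i(t)$. This is not a deficiency of current technology --- it is false in the form you need it. The flows live on incomplete balls $B_{g_i}(x_i,1)$, and without completeness or boundary conditions there is no maximum principle that preserves $\K\geq 0$: negative curvature can immediately propagate inward from the uncontrolled region near the boundary. The paper says this explicitly (``we cannot hope for the property $\K\geq 0$ to be preserved for a local Ricci flow without boundary conditions''). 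The local existence theorem actually available (Theorem \ref{loc_exist_thm}, from \cite{MT2} following Lai and Hochard) only yields $\K(g_i(t))\geq -C$ for some fixed constant $C>0$, and if you run your argument with that bound, the conclusion you reach at time zero is merely $\K(g(0))\geq -C$, which proves nothing about WPIC1.

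The missing ingredient is a quantitative \emph{almost}-preservation statement at the basepoint: the paper applies Theorem 3.1 of \cite{LeeTam} (a local maximum principle along Ricci flows) to each $g_i(t)$, using the $C/t$ curvature decay together with the fact that $g_i(0)=g_i$ is WPIC1, to conclude $\K(g_i(t))\geq -c_1 t^l$ \emph{at the point $x_i$}, with $c_1$ and $l>1$ independent of $i$. This interior estimate survives the smooth compactness limit to give $\K(g(t))\geq -c_1t^l$ at $x_0$, and only then does time-zero regularity (Theorem \ref{IC1_thm}) let one send $t\downto 0$ and obtain $\K(g(0))\geq 0$ at $x_0$. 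Without this step your proof cannot close. A secondary, smaller issue: you assert that the $n$-Riemannian hypothesis on the limit ``supplies'' the uniform volume lower bound, but Gromov--Hausdorff convergence alone does not control volumes; the paper obtains non-collapsing by noting WPIC1 implies $\Ric\geq 0$ and then invoking Colding's volume convergence theorem, and you would need to cite something of this kind as well.
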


Note that because the Gromov-Hausdorff limit in the theorem above is $n$-Riemannian, we have no problem making unambiguous sense of it being WPIC1 at $p$.
The limit is uniquely expressible as a Riemannian manifold, by virtue of the Myers-Steenrod theorem, although its induced Riemannian distance might only agree with the limiting distance metric for points that are near to each other.

There are a number of well-known prior results establishing that non-negativity of certain types of curvature are preserved under Gromov-Hausdorff convergence. 
The theory of Alexandrov spaces deals with the case of nonnegative sectional curvature, while the theory of optimal transportation can handle nonnegative Ricci curvature in arbitrary dimension. Nonnegativity of scalar curvature is not preserved under Gromov-Hausdorff convergence (see also \cite{LT2}).

We will give the proof of Theorem \ref{WPIC1_lim_thm} in Section \ref{GH_WPIC1_sect}.
The strategy we adopt is to construct a local Ricci flow starting with 
the $n$-Riemannian limit as initial data, and to show that this flow almost preserves the $\K$ lower bound. By the time zero regularity of Theorem \ref{IC1_thm} the flow extends smoothly down to time zero, and so the initial data inherits the $\K$ lower bounds of the flow.
Slight variants of this strategy are possible,  all reducing to the regularity theory of Deruelle-Schulze-Simon. 
It is interesting to compare the general strategy with Bamler's proof 
\cite{bamler} of the preservation of scalar curvature lower bounds under $C^0$ convergence of  metric tensors.

\vskip 0.5cm
\noindent
\emph{Acknowledgements:} PT was supported by EPSRC grant EP/T019824/1.

\section{Curvature decay of symmetric Ricci flows on $\R^2$}
\label{press_sect}

In this section we prove Theorem \ref{sym_thm}.

We have already commented that the lower bound $K\geq - \frac{1}{2t}$ is already known.
We prove the upper bound $K\leq \frac{1}{2t}$ using the notion of pressure that is widely used in the study of 
the porous medium equation and the fast diffusion equation.
The calculations we make are close to, for example, those in the work of
Esteban-Rodriguez-Vazquez \cite{ERV}, and the general idea can be traced back to
\cite{AB}. A notable departure from the existing theory of the logarithmic fast diffusion equation is that we do not make any boundedness assumptions on $u$, nor growth or decay conditions at spatial infinity. In Theorem \ref{sym_thm}, all such possible conditions are replaced by the geometric assumption that the flow is complete. The improvements here are possible because we deviate substantially from the earlier literature in our application of the maximum principle, taking a geometric approach and implicitly using the remarkable properties of the distance function and its evolution under Ricci flow.

\begin{proof}[Proof of Theorem \ref{sym_thm}]
On a surface, the Ricci flow can be written $\pl{g}{t}=-2Kg$, where 
$K$ is the Gauss curvature. If we write the metric locally as 
$u(dx^2+dy^2)$ then $K=-\frac{1}{2u}\lap \log u$, where $\lap$ is the Laplacian with respect to the isothermal coordinates $x$ and $y$, and so the conformal factor $u$ evolves under the equation $u_t=\lap \log u$.
Because of the symmetry of the flow in this theorem, the conformal factor can be viewed as a 
solution $u\colon \R\times (0,T)\to (0,\infty)$ to the equation
$$u_t=(\log u)_{xx}.$$
A short calculation confirms that $w:=\frac{1}{u}$, which coincides with the so-called pressure up to a sign, solves the equation
$$w_t=w.w_{xx}-(w_x)^2.$$ 
The quantity $q:=w_{xx}$ then satisfies the equation
\beqa
q_t 
&= [w_t]_{xx}\\
&=[wq-(w_x)^2]_{xx}\\
&=w.q_{xx}+ 2w_x q_x+ q^2-(2q_xw_x+2q^2)\\
&= w.q_{xx}-q^2.
\eeqa
Forgetting the symmetry of the flow, and viewing $q$ as a function on the whole of 
$\R^2\times (0,T)$ that is independent of $y$, this amounts to
$$q_t = \lap_g q - q^2,$$
where $\lap_g$ is the Laplace-Beltrami operator associated with $g(t)$.

Formally, one might then try to make a comparison with 
the function $t\mapsto 1/t$, which solves the ODE $q_t=-q^2$, 
to deduce that $q\leq 1/t$.
But one must worry about the validity of such a comparison when so little is known about the behaviour of $q$ at spatial infinity. 
To avoid this problem we must appeal to the special properties of the Riemannian distance function under Ricci flow that were discovered by Perelman \cite{P1}. He showed how to use these to construct useful time-dependent cut-off functions, and B.-L. Chen \cite{strong_uniqueness} 
invoked this idea when proving his scalar curvature estimates that were alluded to earlier. The key idea of Chen's argument was distilled into the following result by 
Huang, Tam, Tong and the first author.
\begin{lem}[{Special case of \cite[Lemma 5.1]{HLTT}}]
Suppose that $g(t)$ is a complete Ricci flow on a manifold $M$ of general dimension, for $t\in (0,T)$. Suppose that $q:M\times (0,T)\to\R$ is a smooth function satisfying
$$q_t \leq \lap_g q - \al q^2,$$
for some $\al>0$. Then 
$$q(x,t)\leq \frac{1}{\al t}$$
throughout $M\times (0,T)$.
\end{lem}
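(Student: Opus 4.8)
The plan is to reduce the statement to the elementary ODE comparison for $y'=-\al y^2$, whose maximal solution descending from $+\infty$ at the initial time is precisely $t\mapsto \frac{1}{\al t}$, and then to run a maximum principle argument that is legitimate despite the noncompactness of $M$. The one genuinely delicate point is that, with no control over $q$ at spatial infinity, the spatial supremum of $q(\cdot,t)$ need be neither finite nor attained, so the naive Hamilton-type trick of differentiating $m(t):=\sup_x q(\cdot,t)$ is unavailable. The remedy, going back to Perelman \cite{P1} and exploited by B.-L. Chen \cite{strong_uniqueness} in exactly this context, is to localise using a time-dependent cut-off function built from the distance function of the flow. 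A pleasant structural feature is that, because the barrier blows up as $t\downto 0$, the argument never requires any information about $q$ near the initial time: the comparison is seeded automatically.

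Concretely, I would fix an arbitrary spacetime point $(x_1,t_1)$ and a basepoint $x_0$, and for each small $a\in(0,t_1)$ compare $q$ with the shifted barrier $\phi_a(t):=\frac{1}{\al(t-a)}$, which solves $\phi_a'=-\al\phi_a^2$ and blows up as $t\downto a$. Since the flow is smooth, $q$ is bounded on each compact spacetime region, so $q-\phi_a\to-\infty$ as $t\downto a$ on any fixed ball; this provides a barrier at the bottom of the time interval without any hypothesis at $t=0$. To make the supremum attained, I would localise with a Perelman-style cut-off $\psi(x,t)$ supported in a large ball $B_{g(t)}(x_0,R)$ and equal to $1$ on a smaller ball. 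Using the smoothness and completeness of the flow, which supply the local one-sided curvature bound needed on the relevant compact set, Perelman's estimate for $(\pt-\lap_g)d_{g(t)}(\cdot,x_0)$ controls $(\pt-\lap_g)\psi$, with the error supported in an annulus. Examining the first time $t_\ast$ at which the localised quantity reaches the level dictated by the cut-off, the maximum principle produces at an interior touching point, where $q=\phi_a$ and $\lap_g q\le 0$, the chain $\phi_a'\le q_t\le \lap_g q-\al q^2\le -\al q^2=-\al\phi_a^2=\phi_a'$, modulo the controlled cut-off error; choosing $R$ large and inserting the usual infinitesimal slack in $a$ turns this into a contradiction. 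Hence $q\le\phi_a$ on the ball, and letting $R\to\infty$ and then $a\downto 0$ yields $q(x_1,t_1)\le\frac{1}{\al t_1}$. As $(x_1,t_1)$ was arbitrary, the lemma follows.

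The main obstacle is the construction and use of the cut-off: one must arrange that $(\pt-\lap_g)\psi$ is controlled well enough that its contribution is dominated by the quadratic reaction term $-\al q^2$ at the touching point, and crucially this must be achieved without any a priori \emph{global} bound on $q$. This is exactly where Perelman's distance-evolution estimate is indispensable, since it requires only local one-sided curvature control, available here from smoothness on compact subsets, rather than global bounds. The remaining ingredients, namely the ODE comparison, the first-time touching analysis, and the $\varepsilon$-slack needed to upgrade the non-strict inequality at $t_\ast$ to a strict one, are routine.
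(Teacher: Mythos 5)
Your proposal is correct and takes essentially the same route as the paper: the paper does not prove this lemma itself but presents it as a special case of \cite[Lemma 5.1]{HLTT}, whose proof is exactly the B.-L. Chen-style argument you sketch --- comparison with the shifted barrier $\frac{1}{\al(t-a)}$ that blows up as $t\downto a$ (so that no information at the initial time is required), localised by a Perelman time-dependent cut-off built from $d_{g(t)}(\cdot,x_0)$, whose evolution is controlled by Perelman's estimate on $(\pt-\lap)d_{g(t)}$ using only local curvature bounds, with the cut-off errors absorbed by the quadratic reaction term. The paper's surrounding discussion explicitly names this strategy (Perelman's distance-function properties and Chen's localisation) as the content of the cited lemma, so your reconstruction matches the intended proof.
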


Applying this lemma in our situation then implies the desired bound 
$q(x,t)\leq \frac{1}{t}$.
But a calculation shows that 
\beqa
2K & =-\frac{1}{u}|(\log u)_x|^2+q\\
&\leq q\leq \frac{1}{t},
\eeqa
which concludes the proof of the theorem.
\end{proof}

\begin{rem}
\label{r22}
There is an alternative proof available of Theorem \ref{sym_thm}, albeit with a non-explicit constant in the curvature estimate, using a more geometric approach.
It turns out that any complete Riemannian metric of the form
$u(x)(dx^2+dy^2)$, i.e. with conformal factor independent of $y$, enjoys a positive lower bound on its volume ratio at all scales. This alone is enough to obtain $C/t$ decay of curvature using an approach similar to that used in the proof of \cite[Lemma 2.1]{ST1}.
\end{rem}

\section{Time zero regularity in the presence of an \I{} lower bound}
\label{pos_sect}

In this section we prove Theorem \ref{IC1_thm} by reducing the problem to the case covered by Theorem \ref{DSS_thm} of Deruelle-Schulze-Simon.
Neither Theorem \ref{IC1_thm} nor Theorem \ref{DSS_thm} explicitly assumes that any time $t$ balls are compactly contained in $M$ or $\Om$. However, we can deduce such a statement from the distance convergence of assumption \eqref{loc_unif_conv2} as we explain in the following remark. 

\begin{rem} 
\label{tech_rmk}
With a little care, assumption \eqref{loc_unif_conv2} allows us to compare $d_0$ balls to $g(t)$ balls.
Suppose
$x_0\in \Om$, and $r>0$ is sufficiently small so that 
$B_{d_0}(x_0,2r)\subset\subset\Om$.
For sufficiently small $t>0$, $d_{g(t)}(\cdot,x_0)$ and $d_0(\cdot,x_0)$ can only differ by at most $r$ on $\overline{B_{d_0}(x_0,2r)}$, by \eqref{loc_unif_conv2}, 
and we claim that this implies
$$B_{g(t)}(x_0,r)\subset B_{d_0}(x_0,2r).$$ 
This needs some care because
a point in $B_{g(t)}(x_0,r)$ does not a priori live in the region where
$d_{g(t)}(\cdot,x_0)$ and $d_0(\cdot,x_0)$ are close. 
However, if we could pick $\ti y\in B_{g(t)}(x_0,r)$
that did not lie in $B_{d_0}(x_0,2r)$ then by moving along a path 
from $x_0$ to $\ti y$ of $g(t)$-length less than $r$, we would at some point get to a point
$y\in B_{g(t)}(x_0,r)$ with $d_0(x_0,y)=2r$. 
But by our assumption on $t$, this would be a contradiction.

Observe  also that the distance convergence of assumption \eqref{loc_unif_conv2} immediately implies that 
$$B_{d_0}(x_0,r/2) \subset B_{g(t)}(x_0,r),$$
for sufficiently small $t$, so we have inclusions in both directions.
\end{rem}

\begin{proof}[Proof of Theorem \ref{IC1_thm}]
As previously mentioned, the hypothesis $\K\geq -\al_0$ implies a lower Ricci bound
$\Ric\geq -C$, where $C$ depends on $\al_0$ and the dimension of $M$.
In order to apply Theorem \ref{DSS_thm} we are therefore lacking only a bound of the form
$|\Rm|_{g(t)}\leq C/t$.

It suffices to obtain the smooth extension in a neighbourhood of an arbitrary point 
$x_0\in \Om$. Pick $r>0$ sufficiently small so that $B_{d_0}(x_0,2r)\subset\subset \Om$
and also so that $B_{d_0}(x_0,2r)$ is isometric to an $n$-dimensional Riemannian manifold $(N,h)$ (using the hypothesis \eqref{loc_unif_conv2}) with $x_0$ corresponding to $y\in N$.

By Remark \ref{tech_rmk}, after possibly reducing $T>0$
we may assume that 
$$B_{d_0}(x_0,r/2) \subset B_{g(t)}(x_0,r)\subset B_{d_0}(x_0,2r)\subset\subset\Om$$ 
for every $t\in (0,T]$.
By the uniform lower Ricci bound, and the fact that
\begin{equation*}
\frac{d}{dt} \Vol_{g(t)}(B_{d_0}(x_0,\frac{r}{2}))
=-\int_{B_{d_0}(x_0,\frac{r}{2})}R\,dV_{g(t)},
\end{equation*}
(see e.g. \cite[(2.5.7]{RFnotes})
the volume $\Vol_{g(t)}(B_{d_0}(x_0,r/2))$ can only decay exponentially as $t\downto 0$, and in particular, it has a positive lower bound $v_0$ independent of how small we take $t>0$. In particular, we find that 
$$\VolB_{g(t)}(x_0,r)\geq v_0>0$$
for all $t\in (0,T]$.

In the case that $M$ is three-dimensional, we can now apply Lemma 2.1 from \cite{ST1}
to the Ricci flows $t\mapsto g(t+\ep)$, for arbitrarily small $\ep>0$, in order to deduce that $|\Rm|_{g(t)}\leq C/t$ on $B_{g(t)}(x_0,r/2)$ over some small time interval 
$(0,\hat T)$ as required. Yi Lai adapted Lemma 2.1 from \cite{ST1} to higher dimensions assuming a lower bound $\K\geq -C$ as we have here, using theory of 
Bamler, Cabezas-Rivas and Wilking \cite[Lemma 4.2]{BCRW}; see \cite[Lemma 3.4]{yi_lai}.

Either way, we have obtained the required curvature decay 
on $B_{g(t)}(x_0,r/2)$, and hence on $B_{d_0}(x_0,r/4)$ by Remark \ref{tech_rmk}
(after possibly shortening the time interval)
and we can invoke Theorem \ref{DSS_thm} to establish the required smooth extension to $t=0$ near $x_0$.
\end{proof}

\section{Gromov-Hausdorff limits of WPIC1 manifolds are WPIC1}
\label{GH_WPIC1_sect}

In this section we prove Theorem \ref{WPIC1_lim_thm}.
In the process, we will need to run the  Ricci flow locally starting with 
(subsets of) the approximations  $B_{g_i}(x_i,1)$ while retaining a lower uniform curvature bound and gaining $C/t$ decay of the curvature tensor. 
The existence of such a flow was proved in three dimensions in \cite{ST2}, and could also be derived from a combination of
\cite{Hochard} and \cite{ST1}. The proof was extended by Lai \cite{yi_lai}
and Hochard \cite{hochard_thesis} to the analogous result in higher dimensions, and the following version of their extensions is a special case of 
\cite[Theorem 3.3 ]{MT2}.
\begin{thm}
\label{loc_exist_thm}
Given $n\geq 3$ and $\al_0,v_0>0$, there exist $C, T>0$ such that if 
$(M,g_0, x_0)$ is a smooth pointed Riemannian $n$-manifold with $B_{g_0}(x_0,2)\subset\subset M$,  $\K({g_0})\geq -\al_0$,  and $\VolB_{g_0}(x_0,1)\geq v_0$, 
then there exists a smooth Ricci flow $g(t)$, $t\in [0,T]$, on $B_{g_0}(x_0,1)$
with $g(0)=g_0$ where defined, such that for all $t\in (0,T]$ we have
$$|\Rm|_{g(t)}\leq \frac{C}{t}\quad\text{and}\quad
\K({g(t)})\geq -C$$
throughout $B_{g_0}(x_0,1)$, and $\inj_{g(t)}(x_0)\geq \sqrt{\frac{t}{C}}$.
\end{thm}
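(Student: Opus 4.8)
The plan is to reduce to the classical short-time existence theory for \emph{complete} Ricci flows of bounded curvature, and then to upgrade to the scale-invariant estimates by combining a non-collapsing argument with a localised curvature estimate and the curvature-preservation technology of Bamler--Cabezas-Rivas--Wilking \cite{BCRW}. The organising principle is that every constant must depend only on $n$, $\al_0$ and $v_0$, which forces each estimate used to be genuinely local and scale invariant; this is what makes the uniform constants $C,T$ possible.

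First I would localise. Since $B_{g_0}(x_0,2)\subset\subset M$ but $M$ need not be complete, I would leave $g_0$ unchanged on $B_{g_0}(x_0,3/2)$ and modify it only outside this ball --- for instance by a conformal change of the type used in \cite{Hochard} and \cite{ST1} that pushes the incomplete region off to infinity --- so as to obtain a \emph{complete} metric $\hat g_0$ on a manifold $\hat M\supset B_{g_0}(x_0,3/2)$ with globally bounded curvature. The key point is that $\hat g_0$ agrees with $g_0$ on $B_{g_0}(x_0,3/2)$, so the lower bound $\K(\hat g_0)\geq -\al_0$ persists there and the volume $\VolB_{g_0}(x_0,1)\geq v_0$ is untouched; I would make no attempt to control the sign of curvature in the gluing region, only its boundedness. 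Shi's theorem then supplies a complete Ricci flow $\hat g(t)$ of bounded curvature with $\hat g(0)=\hat g_0$ on a short interval, and the entire subsequent analysis takes place near $x_0$, where the bad gluing region cannot yet exert influence over a definite short time.

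Next I would establish the scale-invariant curvature decay on the smaller ball. The hypothesis $\K(g_0)\geq -\al_0$ forces a lower Ricci bound $\Ric\geq -C_0$, so Bishop--Gromov together with $\VolB_{g_0}(x_0,1)\geq v_0$ yields uniform non-collapsing of all sufficiently small balls around $x_0$. Feeding this non-collapsing, the lower curvature bound, and the flow $\hat g(t)$ into the local curvature estimate --- Lemma 2.1 of \cite{ST1} in dimension three, and its higher-dimensional analogue \cite[Lemma 3.4]{yi_lai} built on \cite[Lemma 4.2]{BCRW} --- should give $|\Rm|_{\hat g(t)}\leq C/t$ on $B_{g_0}(x_0,1)$ for $t\in(0,T]$, after possibly shrinking $T$. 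I expect this localised estimate to be the main obstacle: it is precisely the assertion that non-collapsing prevents curvature from concentrating faster than $C/t$, and making it local in space, with no global curvature control beyond the cut-off region, is exactly what the Bamler--Cabezas-Rivas--Wilking machinery is designed to provide.

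Finally I would record the two remaining conclusions. Preservation of the lower bound $\K(\hat g(t))\geq -C$ over $(0,T]$ should follow from the local pinching results of \cite{BCRW}, which propagate a lower bound on the relevant curvature cone forward under the flow even in the absence of global bounds, consistent with the fact that the PIC1 cone is preserved by the Hamilton ODE. The injectivity radius estimate $\inj_{\hat g(t)}(x_0)\geq\sqrt{t/C}$ then follows from $|\Rm|\leq C/t$ together with the non-collapsing, by a Cheeger--Gromov--Taylor type argument applied at scale $\sqrt t$. Restricting $\hat g(t)$ to $B_{g_0}(x_0,1)$ produces the required flow $g(t)$ with $g(0)=g_0$ where defined, and since each constant above depends only on $n,\al_0,v_0$, the uniform $C,T$ are as claimed.
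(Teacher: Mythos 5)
The paper itself does not prove this theorem: it is quoted as a special case of \cite[Theorem 3.3]{MT2}, which in turn rests on the higher-dimensional extensions by Lai \cite{yi_lai} and Hochard \cite{hochard_thesis} of the three-dimensional work in \cite{Hochard, ST1, ST2}. Your sketch mirrors the overall architecture of those proofs (conformal completion near the incomplete boundary, Shi's theorem, local curvature estimates, BCRW-type propagation of the $\K$ lower bound, Cheeger--Gromov--Taylor for the injectivity radius), but it omits their central mechanism, and this omission is a genuine gap rather than a detail.

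The gap is in your second step, where you assert that Shi's theorem supplies a flow over ``a definite short time.'' The existence time furnished by Shi's theorem depends on $\sup|\Rm(\hat g_0)|$, and the hypotheses of the theorem impose \emph{no upper curvature bound at all} on $g_0$: only $\K(g_0)\geq-\al_0$ and $\VolB_{g_0}(x_0,1)\geq v_0$ are assumed. So a single conformal completion followed by Shi yields a flow on a time interval depending on $g_0$ itself, not on $(n,\al_0,v_0)$, and nothing later in your argument recovers the uniform $T$ -- which is the entire content of the theorem, and is essential in the paper's application, where the constants must be independent of $i$ along a sequence of manifolds. The cited proofs overcome this with an iterated extension scheme: flow by Shi for a short time $t_1$ dictated by the (uncontrolled) initial curvature bound; use the local estimates to conclude that at time $t_1$ the curvature on a slightly smaller ball is at most $C/t_1$, which is far better than the naive continuation bound; conformally re-complete the time-$t_1$ slice and apply Shi again to extend for a further time comparable to $t_1/C$; and iterate, with the bookkeeping (Hochard's partial Ricci flows, or the ``pyramid'' construction of \cite{MT2}) showing that the existence times sum geometrically to a uniform $T$ while the accumulated shrinkage of the domains remains controlled. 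This iteration is also what untangles the circularity implicit in your sketch between non-collapsing along the flow, the preserved lower bound $\K\geq-C$ needed to maintain it, and the $C/t$ decay: each is established on the next, slightly smaller, spatial scale at each step of the induction. Without it, your argument proves only a statement with $T$ depending on the full geometry of $g_0$, which is strictly weaker than the theorem.
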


\begin{proof}[Proof of Theorem \ref{WPIC1_lim_thm}]
By hypothesis, a neighbourhood $U$ of $p$ within the limit is isometric to an $n$-dimensional Riemannian manifold $(N,h)$. For one such isometry $\vph:U\to N$, define
$y=\vph(p)$. By taking $r\in (0,\half)$  sufficiently small we can be sure both that 
$B_h(y,2r)\subset\subset N$, and that for all $z_1,z_2\in B_h(y,r)$
the distance $d_h(z_1,z_2)$ is realised by a geodesic lying entirely within 
$B_h(y,r)$.
Then $B_{g_i}(x_i,r)\subset\subset M_i$
and we have pointed Gromov-Hausdorff convergence 
$$(B_{g_i}(x_i,r),d_{g_i},x_i)\to (B_h(y,r),d_h,y).$$ 
Our objective is to prove that $\K\geq 0$ at $y$.
By rescaling, we may as well assume that $r=2$.

%
%
%

Although Theorem \ref{WPIC1_lim_thm} does not refer explicitly to volume bounds, each 
$(B_{g_i}(x_i,2),g_i)$ has nonnegative Ricci curvature (implied by WPIC1 in all dimensions $n\geq 3$) and so Colding's volume convergence \cite{colding}
implies that $\VolB_{g_i}(x_i,1)\to \VolB_{h}(y,1)$ as $i\to\infty$. 
In particular, defining $v_0=\half \VolB_{h}(y,1)>0$ we may assume (after dropping finitely many terms in $i$) that 
$$\VolB_{g_i}(x_i,1)\geq v_0>0$$
for all $i$.
Note that Colding's work is phrased in the weaker context of complete manifolds with uniform Ricci lower bounds, but the same techniques can be extended to this more general local situation.

We can apply Theorem \ref{loc_exist_thm} to each $(M_i,g_i,x_i)$ with $\al_0=1$ and $v_0$ as above, to obtain a sequence of Ricci flows $g_i(t)$ on $B_{g_i}(x_i,1)$
with $g_i(0)=g_i$ where defined, on a common time interval $[0,T]$, such that for all $t\in (0,T]$ we have
$$|\Rm|_{g_i(t)}\leq \frac{C}{t}\quad\text{and}\quad
\K({g_i(t)})\geq -C$$
throughout $B_{g_i}(x_i,1)$, and $\inj_{g_i(t)}(x_i)\geq \sqrt{\frac{t}{C}}$,
for some $i$-independent $C$.
The estimates satisfied by these flows then give local compactness  
similarly to as in \cite{ST2, MT1} with full details given in Section 3  of \cite{mcleod_thesis} (see e.g. \cite[Theorem 3.6.1]{mcleod_thesis}).
In particular, after passing to a subsequence and possibly reducing $T>0$, 
there exist an $n$-dimensional smooth manifold $M$ and a  smooth Ricci flow
$g(t)$ on $M$ for $t\in (0,T]$, a point $x_0\in M$ and
a sequence of smooth maps $\vph_i:M\to M_i$ that are diffeomorphic onto their images
with $\vph_i(x_0)=x_i$, such that 
$$\vph_i^*(g_i(t))\to g(t)$$
smoothly locally on $M\times (0,T]$ as $i\to\infty$.
Moreover, by the control on distances given in \cite[Lemma 3.1]{ST2} we have
$d_{g(t)}\to d_0$ uniformly as $t\downto 0$, where $d_0$ is a distance metric such that $(M,d_0)$ is isometric to a neighbourhood of $y$ in $(N,h)$, with the isometry sending $x_0$ to $y$. 
Note that we are happy here if the limit Ricci flow $(M,g(t))$ corresponds to 
only a tiny neighbourhood of $x_i$ in each approximating flow.
Indeed, this then allows us to assume that $M$ is a ball in $\R^n$ and to prove the compactness using only one harmonic coordinate chart for each $i$, although we do not need this simplification. 

Because the convergence of the flows is smooth, the limit Ricci flow inherits the curvature bound $\K({g(t)})\geq -C$ and we can apply Theorem \ref{IC1_thm}
in order to deduce that we can smoothly extend $g(t)$ to the whole time interval 
$[0,T]$ by making a suitable definition of $g(0)$.
Neighbourhoods of $x_0$ and $y$ in the Riemannian manifolds $(M,g(0))$ and $(N,h)$, respectively, are then isometric as metric spaces and thus as Riemannian manifolds by the Myers-Steenrod theorem.
Our task then reduces to proving that $\K(g(0))\geq 0$ at $x_0$. By smoothness, we have so far established that $\K(g(0))\geq -C$ at $x_0$.

To show that $g(0)$ is WPIC1 at $x_0$,
we need to return to the Ricci flows $g_i(t)$ and obtain better lower curvature bounds that become uniformly closer to WPIC1 as $t\downto 0$.

To achieve this we will appeal to Theorem 3.1 of \cite{LeeTam}. That theorem, applied to each $g_i(t)$ on its domain $B_{g_i}(x_i,1)$, tells us that because the curvature of each of the flows $g_i(t)$ decays like $C/t$ 
and the initial data is WPIC1 at $t=0$ for each $i$,
then after possibly reducing $T>0$ (independently of $i$) we can estimate 
$$\K(g_i(t))\geq -c_1t^l$$
at $x_i$ for some $c_1<\infty$ and $l>1$ dependent on the curvature bound $C$.
In other words, although we cannot hope for the property $\K\geq 0$ to be preserved for a local Ricci flow without boundary conditions, we retain a lower bound for $\K$ that can be made as close as we like to zero as $t\downto 0$. Moreover, this lower bound is independent of $i$ and so can be passed to the limit $i\to\infty$ to give
$$\K(g(t))\geq -c_1t^l$$
at $x_0$.
Because we established the time zero regularity of $g(t)$, this lower bound can now be passed to the limit $t\downto 0$ to conclude that $(M,g(0))$ is WPIC1 at $x_0$, as
required.
\end{proof}

\vskip 0.5cm
\noindent
\href{mailto:mclee@math.cuhk.edu.hk}{mclee@math.cuhk.edu.hk}

\noindent
{{\sc {Department of Mathematics, The Chinese University of Hong Kong, Shatin, N.T., Hong Kong}}

\vskip 0.2cm

\noindent
\url{https://homepages.warwick.ac.uk/~maseq/}

\noindent
{\sc Mathematics Institute, University of Warwick, Coventry,
CV4 7AL, UK.}

\end{document}